\documentclass[12pt]{amsart}

\usepackage{amssymb}
\usepackage{mathrsfs}
\usepackage{hyperref}

\theoremstyle{plain}
\newtheorem{theorem}{Theorem}[section]
\newtheorem{proposition}[theorem]{Proposition}
\newtheorem{lemma}[theorem]{Lemma}
\newtheorem{corollary}[theorem]{Corollary}

\theoremstyle{definition}
\newtheorem{definition}{Definition}[section]
\newtheorem{example}{Example}[section]

\theoremstyle{remark}
\newtheorem*{remark}{Remark}
\title{Galois Extensions via Finiteness of Orbits}
\author{Nikolaos Marmaridis}
\address{Department of Mathematics, University of Ioannina\newline
Ioannina 451 10, Greece}
\email{nmarmar@uoi.gr}

\subjclass[2020]{Primary: 12F10; Secondary  12F05, 20B25}

\keywords{Galois extensions, fixed fields, group actions, finite orbits, simple extensions}

\begin{document}
\begin{abstract}
We present an orbit--theoretic reformulation of Galois theory based on the natural
action of automorphism groups on fields.
Given a field $\mathbf{E}$ and a subgroup $H$ of the automorphism group $\mathrm{Aut}(\mathbf{E})$, we show
that algebraic properties of the extension $\mathbf{E}/\mathbf{E}^H$, where $\mathbf{E}^H$ denotes the fixed field of $H$,  are encoded in the $H$-orbits arising from the action of $H$ on $\mathbf{E}$.

An element $\alpha \in \mathbf{E}$ is algebraic over $\mathbf{E}^H$ if and only if its $H$--orbit is finite. In that case, its minimal polynomial can be explicitly constructed as the product of linear factors over its orbit --a construction that also ensures separability.

At the level of field extensions, we prove that $\mathbf{E}/\mathbf{E}^H$ is Galois if and only if all $H$--orbits have finite length, and that $\mathbf{E}/\mathbf{E}^H$ is a finite Galois extension if and only if the lengths of the $H$--orbits are bounded above. This provides a unified orbit--theoretic characterization of algebraicity, separability, normality, and degree. Artin's Lemma is recovered as a direct consequence of this framework.

Finally, we show that for simple extensions, the fixed field under a subgroup $H$ of $\mathrm{Aut}(\mathbf{F}(\alpha)/\mathbf{F})$ can be described explicitly by evaluating elementary symmetric polynomials on the $H$--orbit of $\alpha$, provided this orbit is finite. This leads to an effective method for computing fixed fields directly from orbit data. A classical example is included to illustrate the approach.
\end{abstract}

\maketitle

\section{Introduction}
Classical Galois theory analyzes field extensions through the symmetries of polynomial roots and their corresponding automorphism groups. A fundamental result, see \cite[Theorem~7.31]{margal},  and \cite[Theorem~4.9]{Morandi}, asserts that a field extension $\mathbf{E}/\mathbf{F}$ is Galois if and only if it is both normal and separable.

Rather than approaching these conditions via splitting fields and minimal polynomials, we adopt a structural perspective based on group actions. Specifically, we study the natural action of subgroups $H$ of $\mathrm{Aut}(\mathbf{E})$ on the field $\mathbf{E}$, and analyze how the resulting $H$--orbits encode algebraic information about the extension $\mathbf{E}/\mathbf{E}^H$, where $\mathbf{E}^H$ denotes the fixed field of $H$.

The relevance of focusing on extensions of the form $\mathbf{E}/\mathbf{E}^H$ is explained by a foundational observation (see Proposition~\ref{prop:basic}): a field extension $\mathbf{E}/\mathbf{F}$ is Galois if and only if $\mathbf{F}$ equals $\mathbf{E}^H$ for some subgroup $H$ of $ \mathrm{Aut}(\mathbf{E})$, provided the extension $\mathbf{E}/\mathbf{E}^H$ is algebraic. This motivates the orbit--theoretic study of fixed-field extensions $\mathbf{E}/\mathbf{E}^H$ as a natural framework for understanding Galois theory.

Our first main result establishes a direct criterion for algebraicity over a fixed field: an element $\alpha \in \mathbf{E}$ is algebraic over $\mathbf{E}^H$ if and only if its $H$--orbit is finite. Moreover, in that case, its minimal polynomial coincides with the orbit polynomial, and its degree equals the orbit length (see Theorem~\ref{thm:algebraic-orbit}).

A deeper structural characterization is provided in Theorem~\ref{theo:bounded}, which shows that boundedness of orbit lengths is equivalent to the finiteness of the acting group $H$. In that case, the extension $\mathbf{E}/\mathbf{E}^H$ is primitive --generated by a single element $\theta$ whose orbit has maximal length-- and $H$ coincides with the full Galois group $\mathrm{Gal}(\mathbf{E}/\mathbf{E}^H)$. Moreover, an element $\alpha \in \mathbf{E}$ is a primitive generator of the extension if and only if its orbit has length equal to $[H:1]$.

We then show that $\mathbf{E}/\mathbf{E}^H$ is a Galois extension if and only if all $H$--orbits are finite, and that the extension is finite Galois if and only if the set of orbit lengths is bounded above (see Theorem~\ref{theo:main1}). These results yield a conceptual reformulation of algebraicity, separability, normality, and degree through orbit structure. In particular, Artin’s Lemma, see~\cite[Chapter~VI,~\S1,~Theorem~1.8]{LangAlgebra}, appears as a corollary of this general framework.

In the setting of simple extensions $\mathbf{F}(\alpha)/\mathbf{F}$, we provide an explicit method for computing fixed fields from orbit data. Specifically, when the $H$--orbit of $\alpha$ is finite, say of length $n$, the fixed field $\mathbf{F}(\alpha)^H$ is generated by evaluating the $n$th elementary symmetric polynomials on the orbit $\mathcal{O}_H(\alpha)$ (see Proposition~\ref{prop:fixed-field-symmetric}). This gives an effective algorithmic description of fixed fields, without relying on the full machinery of the Galois correspondence.

The goal of this paper is to develop a unified orbit--theoretic framework in which several classical properties of field extensions are recovered from a single group-theoretic invariant: orbit length. Finiteness of orbits corresponds to algebraicity; boundedness of orbit lengths corresponds to finite degree; and maximal orbit length characterizes primitive elements.

\section{Notation and standing assumptions}

Throughout the paper, $\mathbf{E}$ denotes a field.
For a set $S$, we write $|S|$ for its cardinality.
For a group $G$, the notation $[G:1]$ denotes its order, and for a subgroup
$H\le G$, the index of $H$ in $G$ is written $[G:H]$.
If $\mathbf{F}$ and $\mathbf{E}$ are fields, the notation $\mathbf{F}\le\mathbf{E}$
means that $\mathbf{F}$ is a subfield of $\mathbf{E}$, and the degree of the
extension is denoted by $[\mathbf{E}:\mathbf{F}]$.

We write $\mathrm{Aut}(\mathbf{E})$ for the group of all field automorphisms of
$\mathbf{E}$.
The set of all subfields of $\mathbf{E}$ is denoted by $\mathcal K(\mathbf{E})$,
and the set of all subgroups of\/ $\mathrm{Aut}(\mathbf{E})$ by
$\mathcal H(\mathrm{Aut}(\mathbf{E}))$.

For a field $\mathbf{F}\in\mathcal K(\mathbf{E})$, the \emph{Galois group} of the extension
$\mathbf{E}/\mathbf{F}$ is defined by
\[
\mathrm{Gal}(\mathbf{E}/\mathbf{F})
:=\{\sigma\in\mathrm{Aut}(\mathbf{E})\mid \sigma(a)=a \text{ for all } a\in\mathbf{F}\}.
\]
Conversely, for a group $H\in\mathcal H(\mathrm{Aut}(\mathbf{E}))$, the
corresponding \emph{fixed field} is
\[
\mathbf{E}^H
:=\{\alpha\in\mathbf{E}\mid \sigma(\alpha)=\alpha \text{ for all } \sigma\in H\}.
\]

Both $(\mathcal K(\mathbf{E}),\le)$ and
$(\mathcal H(\mathrm{Aut}(\mathbf{E})),\le)$ are lattices under inclusion.
They are connected by the order--reversing maps
\[
\mathrm{Gal}(\mathbf{E}/\square)\colon
\mathcal K(\mathbf{E})\longrightarrow \mathcal H(\mathrm{Aut}(\mathbf{E})),
\qquad
\mathbf{F}\longmapsto \mathrm{Gal}(\mathbf{E}/\mathbf{F}),
\]
and
\[
\mathbf{E}^{\square}\colon
\mathcal H(\mathrm{Aut}(\mathbf{E}))\longrightarrow \mathcal K(\mathbf{E}),
\qquad
H\longmapsto \mathbf{E}^H.
\]

These maps satisfy the identities
\begin{align}
\mathrm{Gal}(\mathbf{E}/\square)\circ \mathbf{E}^{\square}
\circ \mathrm{Gal}(\mathbf{E}/\square)
&= \mathrm{Gal}(\mathbf{E}/\square), \label{eq:GGG}\\[1mm]
\mathbf{E}^{\square}\circ \mathrm{Gal}(\mathbf{E}/\square)
\circ \mathbf{E}^{\square}
&= \mathbf{E}^{\square}. \label{eq:EEE}
\end{align}

Following \cite[Definition~7.30]{margal} and \cite[Definition~2.15]{Morandi}, a field extension
$\mathbf{E}/\mathbf{F}$ is called \emph{Galois} if it is algebraic and satisfies
\[
\mathbf{E}^{\mathrm{Gal}(\mathbf{E}/\mathbf{F})}=\mathbf{F},
\]
that is,
$\mathbf{E}^{\square}\circ \mathrm{Gal}(\mathbf{E}/\square)(\mathbf{F})=\mathbf{F}$.
\section{Group Actions and Orbit Polynomials}

The orbit--theoretic approach developed in this paper is based on a simple but
fundamental observation: Galois extensions arise as algebraic extensions
determined by fixed fields of automorphism groups.
This principle allows us to shift attention from polynomials and roots to group
actions on fields.

We begin by formulating this observation explicitly.

\begin{proposition}\label{prop:basic}
The Galois extensions $\mathbf{E}/\mathbf{F}$, where
$\mathbf{F}\in\mathcal K(\mathbf{E})$, are exactly the algebraic extensions of
the form $\mathbf{E}/\mathbf{E}^H$, where
$H\in\mathcal H(\mathrm{Aut}(\mathbf{E}))$.
\end{proposition}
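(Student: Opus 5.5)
The proof is a double inclusion, and both directions follow from the definition of a Galois extension together with the identity \eqref{eq:EEE}.

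For the first inclusion, let $\mathbf{E}/\mathbf{F}$ be Galois with $\mathbf{F}\in\mathcal K(\mathbf{E})$. By definition it is algebraic and $\mathbf{E}^{\mathrm{Gal}(\mathbf{E}/\mathbf{F})}=\mathbf{F}$. Set $H:=\mathrm{Gal}(\mathbf{E}/\mathbf{F})$. This is a subgroup of $\mathrm{Aut}(\mathbf{E})$, since composites and inverses of automorphisms fixing $\mathbf{F}$ pointwise again fix $\mathbf{F}$ pointwise. Then $\mathbf{F}=\mathbf{E}^H$, so $\mathbf{E}/\mathbf{F}$ is an algebraic extension of the form $\mathbf{E}/\mathbf{E}^H$.

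For the converse, let $H\in\mathcal H(\mathrm{Aut}(\mathbf{E}))$ be such that $\mathbf{E}/\mathbf{E}^H$ is algebraic, and put $\mathbf{F}:=\mathbf{E}^H$. The field $\mathbf{F}$ is a subfield of $\mathbf{E}$, being an intersection of the fixed fields of the individual automorphisms. Algebraicity holds by hypothesis, so it remains to show
\[
\mathbf{E}^{\mathrm{Gal}(\mathbf{E}/\mathbf{F})}=\mathbf{F}.
\]
This is exactly identity \eqref{eq:EEE} evaluated at $H$:
\[
\mathbf{E}^{\mathrm{Gal}(\mathbf{E}/\mathbf{E}^H)}=\mathbf{E}^H.
\]

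The only real content is \eqref{eq:EEE}. It is stated in the paper, but I would verify it briefly so the argument is self-contained.

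First, $H\le \mathrm{Gal}(\mathbf{E}/\mathbf{E}^H)$, because every $\sigma\in H$ fixes $\mathbf{E}^H$ pointwise by definition.

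Second, the map $\mathbf{E}^{\square}$ is order-reversing, since a larger group fixes fewer elements. Applying it to the inclusion above gives $\mathbf{E}^{\mathrm{Gal}(\mathbf{E}/\mathbf{E}^H)}\subseteq \mathbf{E}^H$.

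Third, every element of $\mathrm{Gal}(\mathbf{E}/\mathbf{E}^H)$ fixes $\mathbf{E}^H$ by definition, so $\mathbf{E}^H\subseteq \mathbf{E}^{\mathrm{Gal}(\mathbf{E}/\mathbf{E}^H)}$. Together with the second step, this gives equality.

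No step is a genuine obstacle. The one point needing care is to keep the algebraicity hypothesis explicit in the converse direction, since \eqref{eq:EEE} alone only gives the fixed-field condition.
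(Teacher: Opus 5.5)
Your proof is correct and matches the paper's argument: for one direction you take $H=\mathrm{Gal}(\mathbf{E}/\mathbf{F})$, and for the converse you apply identity \eqref{eq:EEE} while carrying algebraicity as a hypothesis. The only difference is that you verify \eqref{eq:EEE} yourself, using $H\le\mathrm{Gal}(\mathbf{E}/\mathbf{E}^H)$ together with order reversal, whereas the paper simply cites it from Section~2.
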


\begin{proof}
\textbf{($\Rightarrow$)}
Suppose $\mathbf{E}/\mathbf{F}$ is a Galois extension.
By definition, the extension is algebraic and satisfies
\[
\mathbf{E}^{\square}\circ \mathrm{Gal}(\mathbf{E}/\square)(\mathbf{F})=\mathbf{F}.
\]
Thus $\mathbf{E}/\mathbf{F}$ is of the form $\mathbf{E}/\mathbf{E}^H$ with
$H:=\mathrm{Gal}(\mathbf{E}/\mathbf{F})$.

\medskip
\textbf{($\Leftarrow$)}
Conversely, suppose that the extension $\mathbf{E}/\mathbf{E}^H$ is algebraic for some 
$H\in\mathcal{H}(\mathrm{Aut}(\mathbf{E}))$.
By identity~\eqref{eq:EEE} from Section~2, we have
\[
\mathbf{E}^{\mathrm{Gal}(\mathbf{E}/\mathbf{E}^H)}=\mathbf{E}^H.
\]
Hence $\mathbf{E}/\mathbf{E}^H$ is a Galois extension.
\end{proof}

The above proposition reduces the study of Galois extensions
$\mathbf{E}/\mathbf{F}$ to extensions of the form $\mathbf{E}/\mathbf{E}^H$,
and motivates the following guiding question:
\begin{center}
\emph{Which subgroups $H \in\mathcal{H}(\mathrm{Aut}(\mathbf{E}))$ give rise to algebraic extensions $\mathbf{E}/\mathbf{E}^H$?}
\end{center}
As we will demonstrate, the answer depends entirely on the finiteness
properties of the orbits induced by the action of $H$ on $\mathbf{E}$.

Let $\mathbf{E}$ be a field and let $H \in\mathcal{H}(\mathrm{Aut}(\mathbf{E}))$.
The group $H$ acts naturally on $\mathbf{E}$ by field automorphisms:
\[
H\times \mathbf{E} \longrightarrow \mathbf{E},
\qquad
(\sigma,\alpha)\longmapsto \sigma(\alpha).
\]
This action decomposes $\mathbf{E}$ into disjoint orbits, which encode the symmetry
of elements under the action of $H$.

\begin{definition}
For $\alpha\in\mathbf{E}$, the \emph{$H$-orbit} of $\alpha$ is the set
\[
\mathcal O_H(\alpha):=\{\sigma(\alpha)\mid \sigma\in H\}.
\]
The \emph{orbit length} of $\alpha$ is
\[
\ell(\mathcal O_H(\alpha)):=|\mathcal O_H(\alpha)|.
\]
\end{definition}

Associated with each element $\alpha\in\mathbf{E}$ is its stabilizer subgroup
\[
\mathrm{Stab}_H(\alpha)
:=\{\sigma\in H\mid \sigma(\alpha)=\alpha\}.
\]
By the Orbit--Stabilizer Theorem, see \cite[Theorem~3.19]{rotman1995introduction}, we have
\[
\ell(\mathcal{O}_H(\alpha))=[H:\mathrm{Stab}_H(\alpha)].
\]
In particular, $\alpha\in \mathbf{E}^H$ if and only if
$\ell(\mathcal{O}_H(\alpha))=1$ if and only if the subgroup $H$ equals $\mathrm{Stab}_H(\alpha)$.

We now establish a fundamental link between orbit structure and algebraic
dependence over fixed fields.
The key object is the \emph{orbit polynomial}, whose roots consist precisely
of the elements of the $H$-orbit of a given element.
\begin{definition}
Let $H\in\mathcal{H}(\mathrm{Aut}(\mathbf{E}))$ and let $\alpha\in\mathbf {E}$ be such that
$\ell(\mathcal O_H(\alpha))<\nobreak\infty$.

The \emph{$H$-orbit polynomial} of\/ $\alpha$ is defined by
\[
f_{\alpha,H}(x)
:=\prod_{\beta\in\mathcal O_H(\alpha)}(x-\beta)
\in \mathbf{E}[x].
\]
\end{definition}

By construction, the set of roots of $f_{\alpha,H}(x)$ is exactly the orbit
$\mathcal O_H(\alpha)$.
The following lemma shows that orbit polynomials are naturally defined over the
fixed field.

\begin{lemma}\label{lem:orbitpoly}
If $\alpha\in\mathbf{E}$ has finite $H$-orbit, then the associated orbit polynomial
$f_{\alpha,H}(x)$ lies in the polynomial ring $\mathbf{E}^H[x]$.
\end{lemma}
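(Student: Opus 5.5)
The plan is to show that every $\tau\in H$ fixes each coefficient of $f_{\alpha,H}(x)$. Throughout, write $\mathcal O:=\mathcal O_H(\alpha)$, so that $f_{\alpha,H}(x)=\prod_{\beta\in\mathcal O}(x-\beta)$.

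Fix $\tau\in H$. Extend $\tau$ to a ring automorphism $\tau^*$ of $\mathbf{E}[x]$ by applying $\tau$ to coefficients and fixing $x$; this is a ring homomorphism. Applying it to the product gives
\[
\tau^*\bigl(f_{\alpha,H}(x)\bigr)=\prod_{\beta\in\mathcal O}\bigl(x-\tau(\beta)\bigr).
\]

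The key step is that $\tau$ restricts to a bijection $\mathcal O\to\mathcal O$. It maps $\mathcal O$ into itself: if $\beta=\sigma(\alpha)$ with $\sigma\in H$, then $\tau(\beta)=(\tau\sigma)(\alpha)\in\mathcal O$, since $H$ is a group. The map is injective because $\tau$ is an automorphism of $\mathbf{E}$. Since $\mathcal O$ is finite, injectivity implies bijectivity. (Alternatively, $\tau^{-1}$ also maps $\mathcal O$ into $\mathcal O$, so finiteness is not even needed here.)

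Consequently, the product $\prod_{\beta\in\mathcal O}(x-\tau(\beta))$ is just a reindexing of $\prod_{\beta\in\mathcal O}(x-\beta)$, because $\mathbf{E}[x]$ is commutative. Hence $\tau^*(f_{\alpha,H})=f_{\alpha,H}$. Comparing coefficients, $\tau(c)=c$ for every coefficient $c$ of $f_{\alpha,H}$. As $\tau\in H$ was arbitrary, every coefficient lies in $\mathbf{E}^H$, so $f_{\alpha,H}(x)\in\mathbf{E}^H[x]$.

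An equivalent route is to observe that the coefficients are, up to sign, the elementary symmetric polynomials evaluated at the elements of $\mathcal O$. These values are invariant under any permutation of $\mathcal O$, and in particular under the permutation induced by $\tau$.

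There is no real obstacle in this argument. The only point needing care is the bijectivity of $\tau|_{\mathcal O}$, which uses the closure of $H$ under composition and inverses. It matters that the product runs over the distinct elements of the orbit, with no repetitions, so that the reindexing is legitimate.
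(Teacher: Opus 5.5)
Your proof is correct and follows the same argument as the paper: each $\tau\in H$ permutes the finite orbit $\mathcal O_H(\alpha)$, so applying $\tau$ coefficientwise only reindexes the product $\prod_{\beta\in\mathcal O_H(\alpha)}(x-\beta)$, and hence every coefficient is fixed by $H$. You simply give more detail than the paper, namely the extension of $\tau$ to $\mathbf{E}[x]$ and the check that $\tau|_{\mathcal O_H(\alpha)}$ is a bijection.
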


\begin{proof}
For any $\sigma\in H$, the restriction of $\sigma$ to the finite set
$\mathcal O_H(\alpha)$ permutes its elements.
Hence,
\[
\prod_{\beta\in\mathcal O_H(\alpha)}(x-\beta)
=
\prod_{\beta\in\mathcal O_H(\alpha)}(x-\sigma(\beta)).
\]
It follows that every coefficient of $f_{\alpha,H}(x)$ is fixed by all elements
of $H$, and therefore belongs to the fixed field $\mathbf{E}^H$.
\end{proof}
\begin{remark}[Related Literature]
Orbit--based constructions have long played a role in classical Galois theory,
particularly in the finite case.

For instance, in \cite[Theorem~81]{rotman2001}, Rotman considers a finite Galois extension $\mathbf{E}/\mathbf{F}$ with Galois group $G := \mathrm{Gal}(\mathbf{E}/\mathbf{F})$, and constructs the orbit polynomial $f_{\alpha, G}(x)$ for an element $\alpha \in \mathbf{E}$ that is a root of an irreducible polynomial over $\mathbf{F}$.

A closely related idea also appears in Lang~\cite[Chapter~VI, \S1]{LangAlgebra},  in the proof of Artin’s Lemma (Theorem~1.8), where a finite subgroup $G$ of automorphisms is considered.

The point of view adopted here is different.
Rather than introducing orbit polynomials as a computational tool \emph{after}
algebraicity has been established, or restricting attention to the finite case,
we use orbit structure as a \emph{conceptual criterion} for algebraicity itself.
In this approach, the finiteness of $H$--orbits becomes the primary organizing
principle, providing a direct link between group actions and algebraic
dependence over fixed fields.
\end{remark}
\section{Algebraicity and Finite Orbits}

The main result of this section establishes that an element of 
$\mathbf{E}$ is algebraic over the fixed field $\mathbf{E}^H$
if and only if its $H$-orbit is finite.
Moreover, when this condition holds, the orbit polynomial introduced in the
previous section coincides with the minimal polynomial of $\alpha$
over $\mathbf{E}^H$.
\begin{theorem}\label{thm:algebraic-orbit}
Let $H \in \mathcal{H}(\mathrm{Aut}(\mathbf{E}))$. 

An element $\alpha \in \mathbf{E}$ is algebraic over $\mathbf{E}^H$ if and only if 
\[\ell(\mathcal{O}_H(\alpha)) < \infty.\]

Moreover, in that case, 
\begin{enumerate}
\item[\textnormal{\textbf{(i)}}] The $H$-orbit polynomial $f_{\alpha,H}(x)$ coincides with the minimal polynomial $m_\alpha(x)$ of $\alpha$ over $\mathbf{E}^H$.
\item[\textnormal{\textbf{(ii)}}] $\alpha$ is separable over $\mathbf{E}^H$ and
\item[\textnormal{\textbf{(iii)}}] $[\mathbf{E}^H(\alpha):\mathbf{E}^H]=\ell(\mathcal{O}_H(\alpha))$. 
\end{enumerate}
\end{theorem}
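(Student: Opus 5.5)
The plan is to prove the two directions separately and then read off (i)--(iii) from the orbit polynomial.

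\emph{Finite orbit implies algebraic.} Suppose $\ell(\mathcal O_H(\alpha))=n<\infty$. By Lemma~\ref{lem:orbitpoly}, $f_{\alpha,H}(x)\in\mathbf{E}^H[x]$. It is monic of degree $n$ and vanishes at $\alpha$, because $\alpha=\mathrm{id}(\alpha)\in\mathcal O_H(\alpha)$. Hence $\alpha$ is algebraic over $\mathbf{E}^H$, and the minimal polynomial $m_\alpha(x)$ divides $f_{\alpha,H}(x)$.

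\emph{Algebraic implies finite orbit.} Suppose $\alpha$ is algebraic over $\mathbf{E}^H$ with minimal polynomial $m_\alpha(x)\in\mathbf{E}^H[x]$. For each $\sigma\in H$, apply $\sigma$ coefficientwise. Since $\sigma$ fixes every coefficient, we get $m_\alpha(\sigma(\alpha))=\sigma(m_\alpha(\alpha))=0$. Thus $\mathcal O_H(\alpha)$ is contained in the set of roots of $m_\alpha$ in $\mathbf{E}$, which has at most $\deg m_\alpha$ elements. So the orbit is finite.

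\emph{The supplementary claims.} With both directions in hand, assume the orbit is finite. The computation just made shows that every $\beta\in\mathcal O_H(\alpha)$ is a root of $m_\alpha$. These roots are pairwise distinct elements of $\mathbf{E}$, so $\prod_{\beta\in\mathcal O_H(\alpha)}(x-\beta)=f_{\alpha,H}(x)$ divides $m_\alpha(x)$ in $\mathbf{E}[x]$. Combined with $m_\alpha\mid f_{\alpha,H}$ from the first step, and since both are monic, we conclude $f_{\alpha,H}=m_\alpha$, which is (i). Then (ii) follows because $f_{\alpha,H}$ has distinct roots by construction, as a product over a set. Finally, (iii) follows because $[\mathbf{E}^H(\alpha):\mathbf{E}^H]=\deg m_\alpha=\deg f_{\alpha,H}=\ell(\mathcal O_H(\alpha))$.

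The only delicate point is the divisibility $f_{\alpha,H}\mid m_\alpha$. It relies on the orbit elements being distinct roots of $m_\alpha$, so that the product of the distinct linear factors $(x-\beta)$ divides $m_\alpha$ in $\mathbf{E}[x]$. Divisibility in $\mathbf{E}[x]$ suffices here because both polynomials are monic with coefficients in $\mathbf{E}^H$, and mutual divisibility of monic polynomials forces equality. The rest is routine.
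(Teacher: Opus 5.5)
Your proof is correct and follows essentially the same route as the paper. Orbit elements are roots of $m_\alpha$, which gives finiteness; Lemma~\ref{lem:orbitpoly} gives algebraicity; and (i)--(iii) follow by identifying $f_{\alpha,H}$ with $m_\alpha$. The only cosmetic difference is that you close (i) by mutual divisibility of monic polynomials, whereas the paper uses $\deg f_{\alpha,H}\le\deg m_\alpha$ together with the minimality of $m_\alpha$, and these two arguments are equivalent.
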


\begin{proof}
($\Rightarrow$) Let $m_\alpha(x)$ be the minimal polynomial of $\alpha$ over $\mathbf{E}^H$.
Since $m_\alpha(x)\in \mathbf{E}^H[x]$, its coefficients are fixed by every
$\sigma\in H$.

For any $\beta\in\mathcal O_H(\alpha)$, there exists $\sigma\in H$ such that
$\beta=\sigma(\alpha)$. Hence
\[
m_\alpha(\beta)
= m_\alpha(\sigma(\alpha))
= \sigma(m_\alpha(\alpha))
= \sigma(0)
= 0.
\]

Thus, every element of $\mathcal{O}_H(\alpha)$ is a root of $m_\alpha(x)$. Since $m_\alpha(x)$ has finitely many roots, it follows that $\ell(\mathcal{O}_H(\alpha)) < \infty$.
\\
($\Leftarrow$) Suppose $\ell(\mathcal{O}_H(\alpha)) < \infty$. Then the $H$-orbit polynomial $f_{\alpha,H}(x)$ is defined and, by Lemma \ref{lem:orbitpoly}, lies in $\mathbf{E}^H[x]$. 

\noindent Moreover, since $f_{\alpha,H}(x)\in \mathbf {E}^H[x]$ and
$f_{\alpha,H}(\alpha)=0$, it follows that $\alpha$ is algebraic over $\mathbf{E}^H$.

\textbf{(i)} Since every element of $\mathcal O_H(\alpha)$ is a root of $m_\alpha(x)$, we have
\[
\deg f_{\alpha,H}(x)\le \deg m_\alpha(x).
\]
On the other hand, the monic polynomial $f_{\alpha,H}(x)\in\mathbf{E}^H[x]$ and vanishes at~$\alpha$.
\\
As $m_\alpha(x)$ is the minimal polynomial of $\alpha$ over $\mathbf{E}^H$, we must have
\[
f_{\alpha,H}(x)=m_\alpha(x).
\]
\indent \textbf{(ii)} Since  $m_\alpha(x)$ equals  $f_{\alpha,H}(x)$ and since all roots of $f_{\alpha,H}(x)$ are distinct, being elements of the orbit
$\mathcal O_H(\alpha)$, it follows that $\alpha$ is separable over $\mathbf{E}^H$.

\textbf{(iii)} 
$
[\mathbf{E}^H(\alpha):\mathbf{E}^H]=\deg m_\alpha(x)=\deg f_{\alpha,H}(x)=\ell(\mathcal{O}_H(\alpha))$.
\end{proof}
Theorem~\ref{thm:algebraic-orbit} provides the precise formulation of the guiding
principle announced in the introduction: algebraicity is equivalent to orbit
finiteness.
\section{Galois Extensions via Orbit Finiteness}

The aim of this section is to show that the fundamental properties of extensions
of the form $\mathbf{E}/\mathbf{E}^H$ are captured by the orbit structure
of the action of $H$ on $\mathbf{E}$. In particular, we will see that the Galois
property corresponds to the finiteness of all $H$-orbits, while the finiteness of
the extension is detected by a uniform bound on orbit lengths.

To formalize this connection, we associate to each 
$H \in \mathcal{H}(\mathrm{Aut}(\mathbf{E}))$
the \emph{set of $H$-orbit lengths}
\[
\mathcal{L}_H := \left\{ \ell(\mathcal{O}_H(\alpha)) \mid \alpha \in \mathbf{E} \right\},
\]
that is, the set of cardinalities of the $H$-orbits of elements
$\alpha \in \mathbf{E}$.

\begin{remark}
Each $H$-orbit has either finite or infinite length. Hence, $\mathcal{L}_H$ is bounded above
if and only if all $H$-orbits are finite. Equivalently, this means that
$\mathcal{L}_H \subseteq \mathbb{N}$ and $\max \mathcal{L}_H$ exists.
\end{remark}
\paragraph{Reduction to the simple case.}
In the proofs that follow, we will occasionally use the Primitive Element
Theorem for finite separable extensions in order to reduce finite extensions
to the simple case.
This use relies only on finiteness and separability and is independent of the
classical Galois correspondence; in particular, it does not presuppose
Artin's Lemma, which will instead be recovered as a consequence of the
orbit--theoretic approach developed here.
\begin{theorem}\label{theo:bounded}
Let $H \in \mathcal{H}(\mathrm{Aut}(\mathbf{E}))$. The following statements are equivalent:
\begin{enumerate}
\item[\textnormal{\textbf{(i)}}] 
The set of $H$-orbit lengths 
$
\mathcal{L}_H
$
is bounded above.
\item[\textnormal{\textbf{(ii)}}] There exists an element $\theta \in \mathbf{E}$, algebraic over $\mathbf{E}^H$, such that
\[
\mathbf{E} = \mathbf{E}^H(\theta).
\]
\item[\textnormal{\textbf{(iii)}}] The group $H$ is finite.
\end{enumerate}

Moreover, in that case:
\begin{enumerate}
\item[\textnormal{\textbf{(iv)}}] $H = \mathrm{Gal}(\mathbf{E}/\mathbf{E}^H)$,
\item[\textnormal{\textbf{(v)}}] $[H : 1] = \ell(\mathcal{O}_H(\theta))$, and
\item[\textnormal{\textbf{(vi)}}] the primitive elements for the extension $\mathbf{E}/\mathbf{E}^H$ are precisely the elements
$\alpha \in \mathbf{E}$ such that
\[
\ell(\mathcal{O}_H(\alpha)) = [H : 1].
\]
\end{enumerate}
\end{theorem}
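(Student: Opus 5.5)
The plan is to prove the cycle (iii)$\Rightarrow$(i)$\Rightarrow$(ii)$\Rightarrow$(iii), and then derive (iv)--(vi) from the degree count obtained along the way. Write $\mathbf{F}:=\mathbf{E}^H$.

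\textbf{(iii)$\Rightarrow$(i).} This step is immediate from the Orbit--Stabilizer Theorem. For every $\alpha$ we have $\ell(\mathcal O_H(\alpha))=[H:\mathrm{Stab}_H(\alpha)]\le[H:1]$, so $\mathcal L_H$ is bounded above by $[H:1]$.

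\textbf{(i)$\Rightarrow$(ii).} Choose $\theta\in\mathbf E$ with $n:=\ell(\mathcal O_H(\theta))=\max\mathcal L_H$. By Theorem~\ref{thm:algebraic-orbit}, every $\alpha\in\mathbf E$ is algebraic and separable over $\mathbf F$ with $[\mathbf F(\alpha):\mathbf F]\le n$.
\begin{itemize}
\item[(a)] Take any $\alpha\in\mathbf E$. The extension $\mathbf F(\theta,\alpha)/\mathbf F$ is finite and separable, since it is generated by finitely many separable algebraic elements.
\item[(b)] By the Primitive Element Theorem, $\mathbf F(\theta,\alpha)=\mathbf F(\gamma)$ for some $\gamma\in\mathbf E$.
\item[(c)] Then $[\mathbf F(\gamma):\mathbf F]=\ell(\mathcal O_H(\gamma))\le n=[\mathbf F(\theta):\mathbf F]$.
\item[(d)] Since $\mathbf F(\theta)\subseteq\mathbf F(\gamma)$, the degrees force $\mathbf F(\theta)=\mathbf F(\gamma)\ni\alpha$.
\end{itemize}
Hence $\mathbf E=\mathbf F(\theta)$. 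This also shows $[\mathbf E:\mathbf F]=n$.

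\textbf{(ii)$\Rightarrow$(iii).} Suppose $\mathbf E=\mathbf F(\theta)$ with $\theta$ algebraic over $\mathbf F$. Every $\sigma\in H$ fixes $\mathbf F$, so $\sigma$ is determined by $\sigma(\theta)$. Restricting to $\theta$ therefore gives an injective map $\sigma\mapsto\sigma(\theta)$ from $H$ into $\mathcal O_H(\theta)$. The orbit $\mathcal O_H(\theta)$ is finite by Theorem~\ref{thm:algebraic-orbit}, so $H$ is finite.

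The same map is surjective by the definition of the orbit, so $[H:1]=\ell(\mathcal O_H(\theta))$. This gives (v), and by Theorem~\ref{thm:algebraic-orbit}(iii) also $[H:1]=[\mathbf E:\mathbf F]$.

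\textbf{(iv).} Let $G:=\mathrm{Gal}(\mathbf E/\mathbf F)$, so $H\le G$. Each $\tau\in G$ is determined by $\tau(\theta)$, which must be a root of $m_\theta=f_{\theta,H}$. The roots of $f_{\theta,H}$ are exactly the elements of $\mathcal O_H(\theta)$, so $|G|\le\ell(\mathcal O_H(\theta))=|H|$. Combined with $H\le G$, this gives $H=G$.

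\textbf{(vi).} An element $\alpha$ is primitive if and only if $[\mathbf F(\alpha):\mathbf F]=[\mathbf E:\mathbf F]$. By Theorem~\ref{thm:algebraic-orbit}(iii) and the count above, this is equivalent to $\ell(\mathcal O_H(\alpha))=[H:1]$.

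The main obstacle is (i)$\Rightarrow$(ii). It needs the Primitive Element Theorem applied to the two-generator extension $\mathbf F(\theta,\alpha)$, and it relies on the separability furnished by Theorem~\ref{thm:algebraic-orbit}(ii) to justify that application. The remaining steps are direct orbit counts.
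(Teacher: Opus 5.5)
Your proof is correct. For (iii)$\Rightarrow$(i), (i)$\Rightarrow$(ii) and (vi) you follow the paper essentially verbatim, but for (ii)$\Rightarrow$(iii), (iv) and (v) you use a different key ingredient.

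The paper observes that $\mathbf{E}=\mathbf{E}^H(\theta)$ is the splitting field of the separable polynomial $f_{\theta,H}$. It then cites the classical theorem (Rotman, Theorem~56) to get $[\mathrm{Gal}(\mathbf{E}/\mathbf{E}^H):1]=[\mathbf{E}:\mathbf{E}^H]=\ell(\mathcal O_H(\theta))$. Finiteness of $H$ follows from $H\le\mathrm{Gal}(\mathbf{E}/\mathbf{E}^H)$, and (iv), (v) come from the squeeze $\ell(\mathcal O_H(\theta))\le[H:1]\le[\mathrm{Gal}(\mathbf{E}/\mathbf{E}^H):1]=\ell(\mathcal O_H(\theta))$.

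You instead note that $\mathrm{Stab}_H(\theta)$ is trivial because $\mathbf{E}=\mathbf{E}^H(\theta)$, so $\sigma\mapsto\sigma(\theta)$ is a bijection $H\to\mathcal O_H(\theta)$. This gives (iii) and (v) at once. You then get (iv) from an elementary upper bound: an element of $\mathrm{Gal}(\mathbf{E}/\mathbf{E}^H)$ is determined by the image of $\theta$, which must be a root of $m_\theta=f_{\theta,H}$ and hence lies in the orbit.

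What your route buys is self-containment. You only need the easy inequality $[\mathrm{Gal}(\mathbf{E}/\mathbf{E}^H):1]\le\ell(\mathcal O_H(\theta))$, because the matching lower bound comes for free from $H\le\mathrm{Gal}(\mathbf{E}/\mathbf{E}^H)$. The theorem the paper cites contains the more substantive lower bound, usually proved by extending isomorphisms. So your argument stays entirely within orbit counting, which suits the paper's stated aim of recovering Artin's Lemma from orbit data. In exchange, the paper's version records explicitly that $\mathbf{E}/\mathbf{E}^H$ is the splitting field of a separable polynomial, i.e.\ that it is normal and separable.
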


\begin{proof}
\textbf{(i)} $\Rightarrow$ \textbf{(ii)} Suppose $\mathcal{L}_H$ is bounded above. Then there exists $\theta \in \mathbf{E}$ such that
\[
\ell(\mathcal{O}_H(\theta)) = \max \mathcal{L}_H.
\]
It suffices to show that every element $\alpha\in\mathbf{E}$ lies in
$\mathbf{E}^H(\theta)$.

For any $\alpha \in \mathbf{E}$, consider the tower of fields:
\[
\mathbf{E}^H \leq \mathbf{E}^H(\theta) \leq \mathbf{E}^H(\theta, \alpha),
\]
which implies
\begin{align} 
[\mathbf{E}^H(\theta) : \mathbf{E}^H] \leq [\mathbf{E}^H(\theta, \alpha) : \mathbf{E}^H]. \label{eq:EEEEE56}
\end{align}
By Theorem~\ref{thm:algebraic-orbit}, both $\theta$ and $\alpha$ are algebraic
over $\mathbf{E}^H$ and separable. Hence, we may apply the Primitive Element Theorem, see \cite{brownPrimitive}, to obtain an element $\beta \in \mathbf{E}$ such that
\[
\mathbf{E}^H(\theta, \alpha) = \mathbf{E}^H(\beta).
\]
Thus,
\begin{align}
[\mathbf{E}^H(\theta, \alpha) : \mathbf{E}^H] &= [\mathbf{E}^H(\beta) : \mathbf{E}^H]\nonumber \\
&= \ell(\mathcal{O}_H(\beta)) \leq \ell(\mathcal{O}_H(\theta)) = [\mathbf{E}^H(\theta) : \mathbf{E}^H]. \label{eq:EEEEE57}
\end{align}

Combining (\ref{eq:EEEEE56}) and (\ref{eq:EEEEE57}), we obtain equality throughout, and hence:
\[
\mathbf{E}^H(\theta) = \mathbf{E}^H(\theta, \alpha).
\]
Since this holds for arbitrary $\alpha \in \mathbf{E}$, it follows that $\mathbf{E} = \mathbf{E}^H(\theta)$.
\\
\textbf{(ii)} $\Rightarrow$ \textbf{(iii)}
Since $\theta$ is algebraic over $\mathbf{E}^H$, Theorem~\ref{thm:algebraic-orbit}
yields
\begin{align}
[\mathbf{E}^H(\theta) : \mathbf{E}^H]
=
\ell(\mathcal{O}_H(\theta))
< \infty.
\label{eq:ena}
\end{align}

Consider the $H$-orbit polynomial $f_{\theta,H}(x)$, which exists by
Theorem~\ref{thm:algebraic-orbit}.
This polynomial is separable over $\mathbf{E}^H$, and its splitting field
coincides with $\mathbf{E}^H(\theta)$, since by assumption
$\mathbf{E}=\mathbf{E}^H(\theta)$.
Therefore, by \cite[Theorem~56]{rotman2001}, we have
\begin{align*}
[\mathrm{Gal}(\mathbf{E}^H(\theta)/\mathbf{E}^H):1]
=
[\mathbf{E}^H(\theta):\mathbf{E}^H].
\end{align*}

Since $\mathbf{E}=\mathbf{E}^H(\theta)$, it follows that
\[
\mathrm{Gal}(\mathbf{E}^H(\theta)/\mathbf{E}^H)
=
\mathrm{Gal}(\mathbf{E}/\mathbf{E}^H).
\]
Consequently, $H$ is finite, since it is a subgroup of the finite group
$\mathrm{Gal}(\mathbf{E}/\mathbf{E}^H)$.
\medskip

\textbf{(iii)} $\Rightarrow$ \textbf{(i)}
Since $H$ is finite, every $H$-orbit has cardinality at most $[H:1]$.
Hence the set $\mathcal{L}_H$ is bounded above by $[H:1]$.
\medskip

\textbf{(iv)+(v)}
From the proof of \textbf{(ii)} $\Rightarrow$ \textbf{(iii)} we have
\[
\ell(\mathcal{O}_H(\theta))
=
[\mathrm{Gal}(\mathbf{E}/\mathbf{E}^H):1]
\ge [H:1],
\]
since $H \le \mathrm{Gal}(\mathbf{E}/\mathbf{E}^H)$.
On the other hand, $\ell(\mathcal{O}_H(\theta)) \le [H:1]$.
Therefore,
\[
\ell(\mathcal{O}_H(\theta)) = [H:1],
\text{\ and\ }
H = \mathrm{Gal}(\mathbf{E}/\mathbf{E}^H).
\]
\textbf{(vi)}
An element $\alpha \in \mathbf{E}$ is primitive for the extension
$\mathbf{E}/\mathbf{E}^H$ if and only if $\mathbf{E}^H(\alpha)=\mathbf{E}$, which
is equivalent to
\[
[\mathbf{E}^H(\alpha):\mathbf{E}^H]
=
[\mathbf{E}:\mathbf{E}^H].
\]
By the preceding discussion,
\[
[\mathbf{E}:\mathbf{E}^H]
=
[\mathbf{E}^H(\theta):\mathbf{E}^H]
=
\ell(\mathcal{O}_H(\theta))
=
[H:1].
\]
Finally, by Theorem~\ref{thm:algebraic-orbit},
\[
[\mathbf{E}^H(\alpha):\mathbf{E}^H]
=
\ell(\mathcal{O}_H(\alpha)).
\]
Thus $\alpha$ is primitive if and only if
\[
\ell(\mathcal{O}_H(\alpha)) = [H:1].
\]
\end{proof}
\begin{theorem}\label{theo:main1}
Let $H\in \mathcal{H}(\mathrm{Aut}(\mathbf{E}))$.
\begin{enumerate}
\item[ \textnormal{ \textbf{(i)}}]  
The extension $\mathbf{E} / \mathbf{E}^H$ is Galois if and only if
$
\mathcal{L}_H \subseteq \mathbb{N}
$.
\item[ \textnormal{ \textbf{(ii)}}]
The extension $\mathbf{E} / \mathbf{E}^H$ is finite Galois if and only if
$\mathcal{L}_H$ is bounded above.
\end{enumerate}
\end{theorem}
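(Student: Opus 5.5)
Both parts reduce quickly to results already established: Proposition~\ref{prop:basic}, Theorem~\ref{thm:algebraic-orbit}, and Theorem~\ref{theo:bounded}.

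\textbf{Part (i).} By Proposition~\ref{prop:basic}, or directly from identity~\eqref{eq:EEE}, we have $\mathbf{E}^{\mathrm{Gal}(\mathbf{E}/\mathbf{E}^H)}=\mathbf{E}^H$. So the fixed-field condition in the definition of a Galois extension holds automatically for $\mathbf{E}/\mathbf{E}^H$. Hence $\mathbf{E}/\mathbf{E}^H$ is Galois if and only if it is algebraic, that is, if and only if every $\alpha\in\mathbf{E}$ is algebraic over $\mathbf{E}^H$. By the first assertion of Theorem~\ref{thm:algebraic-orbit}, this holds exactly when $\ell(\mathcal{O}_H(\alpha))<\infty$ for all $\alpha$. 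Every orbit is nonempty, so this is the same as $\mathcal{L}_H\subseteq\mathbb{N}$.

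\textbf{Part (ii), direction ($\Leftarrow$).} Assume $\mathcal{L}_H$ is bounded above. Then in particular $\mathcal{L}_H\subseteq\mathbb{N}$, so the extension is Galois by part (i). By Theorem~\ref{theo:bounded}, (i)$\Rightarrow$(ii), there is an algebraic $\theta$ with $\mathbf{E}=\mathbf{E}^H(\theta)$. Theorem~\ref{thm:algebraic-orbit}(iii) then gives
\[
[\mathbf{E}:\mathbf{E}^H]=\ell(\mathcal{O}_H(\theta))<\infty.
\]
So the extension is finite Galois, of degree $[H:1]$ by Theorem~\ref{theo:bounded}(v).

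\textbf{Part (ii), direction ($\Rightarrow$).} Assume $\mathbf{E}/\mathbf{E}^H$ is finite Galois, say $[\mathbf{E}:\mathbf{E}^H]=n<\infty$. For every $\alpha\in\mathbf{E}$, Theorem~\ref{thm:algebraic-orbit}(iii) together with the tower law gives
\[
\ell(\mathcal{O}_H(\alpha))=[\mathbf{E}^H(\alpha):\mathbf{E}^H]\le n.
\]
Hence $\mathcal{L}_H$ is bounded above by $n$.

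This direction avoids proving that $H$ is finite directly; finiteness of $H$ follows afterwards from Theorem~\ref{theo:bounded}. The only point needing care is this: the orbit-length formula in Theorem~\ref{thm:algebraic-orbit} is stated for the fixed field $\mathbf{E}^H$ of $H$ itself, not for the full Galois group. This causes no difficulty, because the base field here is precisely $\mathbf{E}^H$.

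There is no serious obstacle in either part. The only subtlety is the convention, in the Remark preceding Theorem~\ref{theo:bounded}, that ``bounded above'' includes finiteness of all orbits. That convention is what lets the ($\Leftarrow$) direction of (ii) invoke part (i) and Theorem~\ref{theo:bounded}.
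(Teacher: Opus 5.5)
Your proof is correct. Part (i) and the implication ``$\mathcal{L}_H$ bounded $\Rightarrow$ finite Galois'' follow the paper's argument exactly.

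For ``finite Galois $\Rightarrow$ $\mathcal{L}_H$ bounded'' you take a different route. The paper uses the Primitive Element Theorem to write $\mathbf{E}=\mathbf{E}^H(\theta)$. It then appeals to Theorem~\ref{theo:bounded}, (ii)$\Rightarrow$(iii)$\Rightarrow$(i). The proof of that chain relies on the classical count $[\mathrm{Gal}(\mathbf{E}/\mathbf{E}^H):1]=[\mathbf{E}:\mathbf{E}^H]$ for splitting fields of separable polynomials. From this it concludes that $H$ is finite, so every orbit has length at most $[H:1]$.

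You instead bound each orbit directly, using only Theorem~\ref{thm:algebraic-orbit}(iii) and the tower law:
\[
\ell(\mathcal{O}_H(\alpha))=[\mathbf{E}^H(\alpha):\mathbf{E}^H]\le[\mathbf{E}:\mathbf{E}^H].
\]
This has three advantages:
\begin{enumerate}
\item[(a)] It is more elementary: it needs neither the Primitive Element Theorem nor the Galois-group count.
\item[(b)] It uses only finiteness of the extension, not separability.
\item[(c)] It avoids the paper's somewhat elliptical last step, which says finiteness of one orbit is ``equivalent'' to boundedness. What is really used there is the primitivity $\mathbf{E}=\mathbf{E}^H(\theta)$, i.e.\ condition (ii) of Theorem~\ref{theo:bounded}.
\end{enumerate}

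The paper's route does produce the finiteness of $H$ and the group-theoretic bound $[H:1]$ in the same step. As you note, though, both follow from Theorem~\ref{theo:bounded} once boundedness is known, and a posteriori your bound $n$ equals $[H:1]$.
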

\begin{proof} 
\textbf{(i)}
By Proposition~\ref{prop:basic}, any algebraic extension of the form
$\mathbf{E}/\mathbf{E}^H$ is Galois.
Therefore, in the present setting, $\mathbf{E}/\mathbf{E}^H$ is Galois
if and only if it is algebraic.
By Theorem~\ref{thm:algebraic-orbit}, $\mathbf{E}/\mathbf{E}^H$ is algebraic
if and only if each orbit $\mathcal{O}_H(\alpha)$ is finite for all
$\alpha \in \mathbf{E}$, that is, if and only if
$\mathcal{L}_H \subseteq \mathbb{N}$.
\\
\textbf{(ii)}
($\Rightarrow$)
Assume that $\mathcal{L}_H$ is bounded above.
Then no $H$-orbit can be infinite, hence $\mathcal{L}_H\subseteq\mathbb{N}$.
By part~\textbf{(i)}, the extension $\mathbf{E}/\mathbf{E}^H$ is Galois.
By Theorem~\ref{theo:bounded}, there exists an element $\theta\in\mathbf{E}$,
algebraic over $\mathbf{E}^H$, such that
\[
\mathbf{E}=\mathbf{E}^H(\theta).
\]
Consequently,
\[
[\mathbf{E}:\mathbf{E}^H]
=
[\mathbf{E}^H(\theta):\mathbf{E}^H]
=
\ell(\mathcal{O}_H(\theta))
<\infty,
\]
and therefore $\mathbf{E}/\mathbf{E}^H$ is a finite Galois extension.
\\
($\Leftarrow$)
Conversely, assume that $\mathbf{E}/\mathbf{E}^H$ is a finite Galois extension.
Then it is finite and separable.
By the Primitive Element Theorem, see \cite{brownPrimitive},
there exists $\theta\in\mathbf{E}$ such that
\[
\mathbf{E}=\mathbf{E}^H(\theta).
\]
Since $\theta$ is algebraic over $\mathbf{E}^H$, Theorem~\ref{thm:algebraic-orbit}
implies that $\ell(\mathcal O_H(\theta))<\infty$.
By Theorem~\ref{theo:bounded}, this is equivalent to the boundedness of
$\mathcal L_H$.
\end{proof}
We may now conclude with the classical result of Artin, as a special case of the above.
\begin{corollary}[Artin's Lemma]\label{cor:Artin}
Let $H \in \mathcal{H}(\mathrm{Aut}(\mathbf{E}))$ be a finite subgroup. Then  $\mathbf{E}/\mathbf{E}^H$ is a finite and Galois extension, with $
\mathrm{Gal}(\mathbf{E}/\mathbf{E}^H) = H$.
\end{corollary}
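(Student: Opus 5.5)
The corollary follows directly from Theorem~\ref{theo:bounded} and Theorem~\ref{theo:main1}, so the plan is to check the hypotheses and chain the implications.

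Since $H$ is finite, condition \textbf{(iii)} of Theorem~\ref{theo:bounded} holds. The implication \textbf{(iii)} $\Rightarrow$ \textbf{(i)} then gives that $\mathcal{L}_H$ is bounded above. Concretely, every orbit $\mathcal{O}_H(\alpha)=\{\sigma(\alpha)\mid\sigma\in H\}$ has at most $[H:1]$ elements, so $\mathcal{L}_H\subseteq\{1,\dots,[H:1]\}$.

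\textbf{Finite and Galois.} By Theorem~\ref{theo:main1}\textbf{(ii)}, boundedness of $\mathcal{L}_H$ is equivalent to $\mathbf{E}/\mathbf{E}^H$ being a finite Galois extension. Tracing the dependencies, this rests on three facts:
\begin{enumerate}
\item[(a)] Theorem~\ref{thm:algebraic-orbit}: finite orbits give algebraicity and separability of each element.
\item[(b)] Proposition~\ref{prop:basic}: an algebraic extension of the form $\mathbf{E}/\mathbf{E}^H$ is Galois.
\item[(c)] The step \textbf{(i)} $\Rightarrow$ \textbf{(ii)} of Theorem~\ref{theo:bounded}: an element $\theta$ of maximal orbit length generates $\mathbf{E}$ over $\mathbf{E}^H$.
\end{enumerate}
Consequently $[\mathbf{E}:\mathbf{E}^H]=\ell(\mathcal{O}_H(\theta))<\infty$.

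\textbf{Identification of the Galois group.} Because the equivalent conditions of Theorem~\ref{theo:bounded} hold, its part \textbf{(iv)} gives $H=\mathrm{Gal}(\mathbf{E}/\mathbf{E}^H)$. Part \textbf{(v)} adds $[\mathbf{E}:\mathbf{E}^H]=[H:1]$, which is the usual quantitative form of Artin's Lemma.

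\textbf{Main obstacle.} The only substantive step is the equality of groups in \textbf{(iv)}. It relies on $|\mathrm{Gal}(\mathbf{E}/\mathbf{E}^H)|=[\mathbf{E}:\mathbf{E}^H]$ for the splitting field of the separable orbit polynomial $f_{\theta,H}(x)$, combined with the inequality $\ell(\mathcal{O}_H(\theta))\le[H:1]$. I would recheck that $\mathbf{E}=\mathbf{E}^H(\theta)$ really is the splitting field of $f_{\theta,H}(x)$. This holds because every root $\sigma(\theta)$ of $f_{\theta,H}(x)$ lies in $\mathbf{E}$, and $\theta$ itself is one of the roots, so adjoining all roots to $\mathbf{E}^H$ yields exactly $\mathbf{E}^H(\theta)=\mathbf{E}$. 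With that confirmed, the squeeze
\[
[H:1]\le[\mathrm{Gal}(\mathbf{E}/\mathbf{E}^H):1]=\ell(\mathcal{O}_H(\theta))\le[H:1]
\]
forces equality, which completes the argument.
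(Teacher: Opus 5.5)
Your proposal is correct and follows the paper's proof: finiteness of $H$ bounds $\mathcal{L}_H$ by $[H:1]$, Theorem~\ref{theo:main1}\textbf{(ii)} then gives that $\mathbf{E}/\mathbf{E}^H$ is finite Galois, and Theorem~\ref{theo:bounded}\textbf{(iv)} identifies $\mathrm{Gal}(\mathbf{E}/\mathbf{E}^H)$ with $H$. Your check that $\mathbf{E}^H(\theta)$ is the splitting field of $f_{\theta,H}(x)$ is the point the paper uses in step \textbf{(ii)}$\Rightarrow$\textbf{(iii)}, and your squeeze is the paper's argument for \textbf{(iv)}+\textbf{(v)}.
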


\begin{proof}
Since $H$ is finite, every $H$-orbit is finite and
$\mathcal L_H$ is bounded above by $[H:1]$.
By Theorem~\ref{theo:main1}, the extension $\mathbf{E}/\mathbf{E}^H$ is Galois.
Moreover, by Theorem~\ref{theo:bounded},
\[
\mathrm{Gal}(\mathbf{E}/\mathbf{E}^H)=H.
\]
\end{proof}
We now illustrate the orbit--theoretic approach to Galois theory through  concrete examples.
\begin{example}[Algebraic Closure of $\mathbb{F}_p$]
Let $p$ be a prime number, and let $\overline{\mathbb{F}}_p$ denote the algebraic
closure of the finite field $\mathbb{F}_p$.

The Frobenius map
\[
\sigma:\overline{\mathbb{F}}_p\longrightarrow\overline{\mathbb{F}}_p,
\qquad
\alpha\longmapsto \alpha^p,
\]
is an element of\/ $\mathrm{Aut}(\overline{\mathbb{F}}_p)$.

Consider the cyclic group $\langle\sigma\rangle$. We claim that the set
\[
\mathcal{L}_{\langle\sigma\rangle}
:=\{\ell(\mathcal{O}_{\langle\sigma\rangle}(\alpha))\mid
\alpha\in\overline{\mathbb{F}}_p\}
\]
is a subset of\/ $\mathbb{N}$ that is unbounded above.

First, observe that the fixed field
$\overline{\mathbb{F}}_p^{\langle\sigma\rangle}$ coincides with $\mathbb{F}_p$.
Indeed, an element $\alpha\in\overline{\mathbb{F}}_p$ is fixed by $\sigma$
if and only if $\alpha^p=\alpha$, which holds precisely for
$\alpha\in\mathbb{F}_p$.

Since $\overline{\mathbb{F}}_p/\mathbb{F}_p$ is an algebraic extension and
$\overline{\mathbb{F}}_p^{\langle\sigma\rangle}=\mathbb{F}_p$, it follows that the orbit length set 
$\mathcal{L}_{\langle\sigma\rangle}$ is a subset of $\mathbb{N}$.
Moreover, $\mathcal{L}_{\langle\sigma\rangle}$ coincides with the set
\[
\{\deg m_\alpha(x)\mid \alpha\in\overline{\mathbb{F}}_p\},
\]
where $m_\alpha(x)$ denotes the minimal polynomial of $\alpha$ over $\mathbb{F}_p$.

Since $\mathbb{F}_p[x]$ contains irreducible polynomials of arbitrarily large
degree, the set $\mathcal{L}_{\langle\sigma\rangle}$ is unbounded above.

Finally, by Theorem~\ref{theo:bounded}, the unboundedness of
$\mathcal{L}_{\langle\sigma\rangle}$ implies that the group
$\langle\sigma\rangle$ is infinite.
\end{example}
\begin{example}[Primitive Elements]
Let $\{p_i \mid 1 \le i \le n\}$ be a set of $n$ distinct prime numbers, and define
\[
\sqrt{P}:=\{\sqrt{p_i}\mid 1\le i\le n\},
\qquad
\sqrt{\widehat{P}}_i:=\sqrt{P}\setminus\{\sqrt{p_i}\}.
\]

Consider the field extension
\[
\mathbb{Q}(\sqrt{P})
:=\mathbb{Q}(\sqrt{p_1},\sqrt{p_2},\dots,\sqrt{p_n}).
\]
We claim the following:

\textit{
The element
\[
\theta:=\sum_{i=1}^n \sqrt{p_i}\in \mathbb{Q}(\sqrt{P})
\]
is a primitive element for the extension $\mathbb{Q}(\sqrt{P})/\mathbb{Q}$ and  $
\mathrm{Gal}(\mathbb{Q}(\sqrt{P})/\mathbb{Q})$ is isomorphic to the group $(\mathbb{Z}/2\mathbb{Z})^n$.}

According to Roth~\cite[Corollaries~1 and~2]{Roth1971}, for each $i, 1\leq i\leq n,$ the element
$\sqrt{p_i}$ does not lie in $\mathbb{Q}(\sqrt{\widehat{P}}_i)$, and the degree of
the extension $\mathbb{Q}(\sqrt{P})/\mathbb{Q}$ is $2^n$.

Since
\[
\mathbb{Q}(\sqrt{P})
=\mathbb{Q}(\sqrt{\widehat{P}}_i)(\sqrt{p_i}),
\]
and the minimal polynomial of $\sqrt{p_i}$ over
$\mathbb{Q}(\sqrt{\widehat{P}}_i)$ is $x^2-p_i$, there exist $n$ distinct field
automorphisms
\[
\sigma_i\in\mathrm{Aut}(\mathbb{Q}(\sqrt{P})),\qquad 1\le i\le n,
\]
satisfying
\[
\sigma_i(\sqrt{p_i})=-\sqrt{p_i},
\qquad
\sigma_i(q)=q \quad \text{for all } q\in\mathbb{Q}(\sqrt{\widehat{P}}_i).
\]

Let $H$ be the group generated by the automorphisms $\sigma_1,\dots,\sigma_n$.
Since each $\sigma_i$ has order~$2$ and
$\sigma_i\circ\sigma_j=\sigma_j\circ\sigma_i$ for all $i,j$,
the group $H$ is abelian and isomorphic to $(\mathbb{Z}/2\mathbb{Z})^n$.
In particular, $
[H:1]=2^n
$.

By Corollary~\ref{cor:Artin}, the extension
$\mathbb{Q}(\sqrt{P})/\mathbb{Q}(\sqrt{P})^H$ is Galois, with
\[
\mathrm{Gal}(\mathbb{Q}(\sqrt{P})/\mathbb{Q}(\sqrt{P})^H)=H
\cong (\mathbb{Z}/2\mathbb{Z})^n
\]
and
\[
[\mathbb{Q}(\sqrt{P}):\mathbb{Q}(\sqrt{P})^H]=[H:1]=2^n.
\]

The $H$-orbit of
\[
\theta:=\sum_{i=1}^n\sqrt{p_i}
\]
is
\[
\mathcal{O}_H(\theta)
=\{\pm\sqrt{p_1}\pm\sqrt{p_2}\pm\dots\pm\sqrt{p_n}\},
\]
so $\ell(\mathcal{O}_H(\theta))=[H:1]$.
\\
By Theorem~\ref{theo:bounded}, the element $\theta$ is primitive for the extension
\[\mathbb{Q}(\sqrt{P})/\mathbb{Q}(\sqrt{P})^H.\]

Finally, since
\[
\mathbb{Q}\le\mathbb{Q}(\sqrt{P})^H\le\mathbb{Q}(\sqrt{P})
\ \text{and}\ 
[\mathbb{Q}(\sqrt{P}):\mathbb{Q}]=2^n
=[\mathbb{Q}(\sqrt{P}):\mathbb{Q}(\sqrt{P})^H],
\]
we conclude that $\mathbb{Q}(\sqrt{P})^H=\mathbb{Q}$. Therefore,  $\mathbb{Q}(\sqrt{P})/\mathbb{Q}$ is a Galois extension with 
\[
\mathrm{Gal}(\mathbb{Q}(\sqrt{P})/\mathbb{Q})\cong (\mathbb{Z}/2\mathbb{Z})^n
\]
and primitive element $\theta=\sum_{i=1}^n\sqrt{p_i}$.
\end{example}
\section{An Application: Fixed Fields via Symmetric Polynomials}
We conclude by showing that the orbit data developed above allow fixed fields to
be computed explicitly.
In the case of a simple extension, this computation reduces to elementary
symmetric polynomials on a single orbit, see~\cite[Proposition~8.24]{margal}.

\begin{proposition}\label{prop:fixed-field-symmetric}
Let\/ $\mathbf{F}(\alpha)/\mathbf{F}$ be a simple field extension, and let
$H$ be a subgroup of the Galois group $\mathrm{Gal}(\mathbf{F}(\alpha)/\mathbf{F})$.

Suppose that the $H$-orbit of $\alpha$ is finite, say
\[
\mathcal O_H(\alpha)=\{\alpha_1,\alpha_2,\dots,\alpha_n\}.
\]
Then the fixed field $\mathbf{F}(\alpha)^H$ coincides with
\[
\mathbf{F}\bigl(
\varepsilon_1(\alpha_1,\dots,\alpha_n),
\dots,
\varepsilon_n(\alpha_1,\dots,\alpha_n)
\bigr),
\]
where $\varepsilon_i, 1\leq i\leq n,$ denotes the $i$th elementary symmetric polynomial.
\end{proposition}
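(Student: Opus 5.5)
Set $\mathbf{K}:=\mathbf{F}\bigl(\varepsilon_1(\alpha_1,\dots,\alpha_n),\dots,\varepsilon_n(\alpha_1,\dots,\alpha_n)\bigr)$; the plan is to show $\mathbf{K}\le \mathbf{F}(\alpha)^H$ and then that the two fields have the same degree below $\mathbf{F}(\alpha)$. The first inclusion comes from Lemma~\ref{lem:orbitpoly}. The orbit polynomial
\[
f_{\alpha,H}(x)=\prod_{i=1}^n(x-\alpha_i)=\sum_{i=0}^{n}(-1)^i\varepsilon_i(\alpha_1,\dots,\alpha_n)\,x^{n-i}
\]
lies in $\mathbf{F}(\alpha)^H[x]$, so each $\varepsilon_i(\alpha_1,\dots,\alpha_n)$ belongs to $\mathbf{F}(\alpha)^H$. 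Since $H\le \mathrm{Gal}(\mathbf{F}(\alpha)/\mathbf{F})$, we also have $\mathbf{F}\le \mathbf{F}(\alpha)^H$. Together these give $\mathbf{K}\le \mathbf{F}(\alpha)^H\le \mathbf{F}(\alpha)$.

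For the reverse inclusion I would apply Theorem~\ref{thm:algebraic-orbit} with $\mathbf{E}=\mathbf{F}(\alpha)$. The $H$-orbit of $\alpha$ is finite, so $\alpha$ is algebraic over $\mathbf{F}(\alpha)^H$, its minimal polynomial there is $f_{\alpha,H}(x)$, and $[\mathbf{F}(\alpha)^H(\alpha):\mathbf{F}(\alpha)^H]=n$. Moreover $\mathbf{F}(\alpha)^H(\alpha)=\mathbf{F}(\alpha)$, because $\mathbf{F}\le\mathbf{F}(\alpha)^H$. Hence $[\mathbf{F}(\alpha):\mathbf{F}(\alpha)^H]=n$.

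On the other side, $f_{\alpha,H}(x)$ has all its coefficients in $\mathbf{K}$, is monic of degree $n$, and vanishes at $\alpha$. Therefore $\mathbf{K}(\alpha)=\mathbf{F}(\alpha)$, since $\mathbf{F}\le\mathbf{K}$, and $[\mathbf{F}(\alpha):\mathbf{K}]\le n$. The tower law applied to $\mathbf{K}\le\mathbf{F}(\alpha)^H\le\mathbf{F}(\alpha)$ then gives
\[
n\ \ge\ [\mathbf{F}(\alpha):\mathbf{K}]=[\mathbf{F}(\alpha):\mathbf{F}(\alpha)^H]\,[\mathbf{F}(\alpha)^H:\mathbf{K}]=n\,[\mathbf{F}(\alpha)^H:\mathbf{K}],
\]
so $[\mathbf{F}(\alpha)^H:\mathbf{K}]=1$ and $\mathbf{K}=\mathbf{F}(\alpha)^H$.

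The step that needs care is the passage from $\mathbf{K}(\alpha)$ to $\mathbf{F}(\alpha)$. The argument never requires $\alpha$ to be algebraic over $\mathbf{F}$: finiteness of the orbit alone makes $\alpha$ algebraic over $\mathbf{K}$, and the equality $\mathbf{F}(\alpha)=\mathbf{K}(\alpha)$ holds simply because $\mathbf{F}\le\mathbf{K}\le\mathbf{F}(\alpha)$. The degree comparison therefore goes through even when $\mathbf{F}(\alpha)/\mathbf{F}$ is transcendental.
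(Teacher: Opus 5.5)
Your proof is correct and follows essentially the same route as the paper. Both arguments put the $\varepsilon_i(\alpha_1,\dots,\alpha_n)$ in $\mathbf{F}(\alpha)^H$ via the coefficients of the orbit polynomial, use Theorem~\ref{thm:algebraic-orbit} to get $[\mathbf{F}(\alpha):\mathbf{F}(\alpha)^H]=n$, and compare this with $[\mathbf{F}(\alpha):\mathbf{K}]\le n$ in the tower $\mathbf{K}\le\mathbf{F}(\alpha)^H\le\mathbf{F}(\alpha)$; your explicit tower-law step and the remark that $\mathbf{K}(\alpha)=\mathbf{F}(\alpha)$ holds without $\alpha$ being algebraic over $\mathbf{F}$ spell out what the paper uses implicitly.
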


\begin{proof}
Let 
\[
f_{\alpha,H}(x) = \prod_{\beta \in \mathcal{O}_H(\alpha)} (x - \beta)= x^n + \kappa_{n-1}x^{n-1} + \cdots + \kappa_0
\]
 be the $H$-orbit polynomial of $\alpha$, where 
\[
\mathcal{O}_H(\alpha) = \{ \alpha_1, \dots, \alpha_n \}.
\] 
By Theorem~\ref{thm:algebraic-orbit}, the polynomial $f_{\alpha,H}(x)$ coincides
with the minimal polynomial of $\alpha$ over $\mathbf{F}(\alpha)^H$. Therefore,
\begin{align} 
[\mathbf{F}(\alpha)^H(\alpha) : \mathbf{F}(\alpha)^H] = n.\label{eq:EEE3}
\end{align}
By Vi\`{e}te's formulas, the coefficient $\kappa_{n-i}$  of $f_{\alpha,H}(x)$, for $1 \leq i \leq n$, is given by
 $(-1)^i \varepsilon_i(\alpha_1, \dots, \alpha_n)$, where $\varepsilon_i$ is the $i$th elementary symmetric polynomial. Hence, $\varepsilon_i(\alpha_1, \dots, \alpha_n)\in \mathbf{F}(\alpha)^H$, and therefore we may form 
the tower of fields:
\begin{align} 
\mathbf{F}\big( \varepsilon_1(\alpha_1, \dots, \alpha_n), \dots, \varepsilon_n(\alpha_1, \dots, \alpha_n) \big)
 \leq\mathbf{F}(\alpha)^H  \leq\ \mathbf{F}(\alpha)^H(\alpha) = \mathbf{F}(\alpha).\label{eq:EEEtower}
\end{align}
Since $f_{\alpha,H}(x)$ lies in the polynomial ring 
\[
\mathbf{F}\big( \varepsilon_1(\alpha_1, \dots, \alpha_n), \dots, \varepsilon_n(\alpha_1, \dots, \alpha_n) \big)[x]
\]
and vanishes at $\alpha$, it follows that
\begin{align} 
[\mathbf{F}(\alpha) : \mathbf{F}\big( \varepsilon_1(\alpha_1, \dots, \alpha_n), \dots, \varepsilon_n(\alpha_1, \dots, \alpha_n) \big)] \leq n.\label{eq:EEE4}
\end{align}
By the tower of fields (\ref{eq:EEEtower}) and relations (\ref{eq:EEE3}) and (\ref{eq:EEE4}) we obtain:
\[
\begin{aligned}
[\mathbf{F}(\alpha)^H(\alpha) : \mathbf{F}(\alpha)^H]
&= n \\
&= [\mathbf{F}(\alpha)^H(\alpha) :
\mathbf{F}\big( \varepsilon_1(\alpha_1, \dots, \alpha_n), \dots,
\varepsilon_n(\alpha_1, \dots, \alpha_n) \big)].
\end{aligned}
\]

Hence, the intermediate field $\mathbf{F}(\alpha)^H$ must coincide with 
\[
\mathbf{F}\big( \varepsilon_1(\alpha_1, \dots, \alpha_n), \dots, \varepsilon_n(\alpha_1, \dots, \alpha_n) \big),
\]
as desired.
\end{proof}

The proposition above shows that the fixed field
$\mathbf{F}(\alpha)^H$ admits an explicit description in terms of symmetric
expressions evaluated on the $H$-orbit of the single element $\alpha$.

\subsection*{A classical example}

We illustrate Proposition~\ref{prop:fixed-field-symmetric} with a classical
example originating in Artin, see \cite[Section~II.G]{ArtinGalois} and \cite[Example~8.28]{margal}. Our presentation emphasizes the
orbit--theoretic viewpoint and the role of orbit polynomials in describing
the fixed field.

Let us consider the simple extension $\mathbf{F}(t)/\mathbf{F}$, where
$\mathbf{F}(t)$ denotes the field of rational functions over $\mathbf{F}$.

The automorphisms
\begin{align*}
\sigma_0(t)&=t, & \sigma_1(t)&=1-t, & \sigma_2(t)&=\frac1t,\\
\sigma_3(t)&=1-\frac1t, 
& \sigma_4(t)&=\frac1{1-t}, 
& \sigma_5(t)&=\frac{t}{t-1}.
\end{align*}
are elements of the Galois group $\mathrm{Gal}(\mathbf{F}(t)/\mathbf{F})$.

The set
\[
H=\{\sigma_0,\sigma_1,\sigma_2,\sigma_3,\sigma_4,\sigma_5\}
\]
is a subgroup of 
$\mathrm{Gal}(\mathbf{F}(t)/\mathbf{F})$ with generators $\sigma_1$ and $\sigma_2$.

The $H$-orbit of the primitive element $t$ is
\[
\mathcal O_H(t)
=\left\{
t,\ 1-t,\ \frac1t,\ 1-\frac1t,\ \frac1{1-t},\ \frac{t}{t-1}
\right\}.
\]
Let 
\[
\alpha_i
:=\varepsilon_i\!\left(
t,\ 1-t,\ \frac1t,\ 1-\frac1t,\ \frac1{1-t},\ \frac{t}{t-1}
\right), \quad  1\leq i\leq 6,
\]
denote the evaluation of  the elementary symmetric polynomial $\varepsilon_i(x_1, x_2, \dots, x_6)$ on the elements of $\mathcal O_H(t)$.
  
A direct computation shows that
\[
\alpha_1=\alpha_5=3,\quad
\alpha_6=1, \quad \alpha_3=-\frac{2-6 t+5 t^2+5 t^4-6 t^5+2 t^6}{(-1+t)^2 t^2}
\]
and
\[
\alpha_2=\alpha_4
=-\frac{1-3t+5t^3-3t^5+t^6}{(t-1)^2t^2}.
\]
Moreover,
\[
\alpha_3=
\begin{cases}
2\alpha_2-5, & \text{if }\mathrm{char}(\mathbf{F})\neq 2,\\[2mm]
1, & \text{if }\mathrm{char}(\mathbf{F})=2.
\end{cases}
\]
Thus,
\[
\mathbf{F}(t)^H=\mathbf{F}(\alpha_2),
\]
recovering Artin's classical computation using only orbit data.

As shown in \cite[Section~II.G]{ArtinGalois}, the fixed field $\mathbf{F}(t)^H$ is equal to 
\[
\mathbf{F}\left(\frac{(t^2 - t + 1)^3}{(t-1)^2 t^2}\right).
\]
Note that
\[
\alpha_2=-\frac{1 - 3t + 5t^3 - 3t^5 + t^6}{(t-1)^2 t^2}
= 6-\frac{(t^2 - t + 1)^3}{(t-1)^2 t^2} ,
\]
so $\alpha_2$ differs from Artin's generator by a constant.
\begin{remark}
Proposition~\ref{prop:fixed-field-symmetric} provides an explicit and effective
method for determining fixed subfields of finite Galois extensions using only
orbit data.

Indeed, let $\mathbf{E}/\mathbf{F}$ be a finite Galois extension with Galois group
$G := \mathrm{Gal}(\mathbf{E}/\mathbf{F})$.
Then $\mathbf{F} = \mathbf{E}^G$, and by the Primitive Element Theorem there exists
an element $\theta \in \mathbf{E}$ such that
\[
\mathbf{E} = \mathbf{E}^G(\theta).
\]
Given a subgroup $H \le G$, the fixed field $\mathbf{E}^H$ can be determined by
computing the finite $H$-orbit of $\theta$ and evaluating the elementary
symmetric polynomials on this orbit.
The resulting expressions generate $\mathbf{E}^H$.

In this way, fixed fields are recovered directly from orbit data, without
invoking the full Galois correspondence.
\end{remark}
\section{Conclusion}

The orbit structure of subgroup actions on fields provides a unifying invariant for analyzing fundamental properties of field extensions. In this paper, we have shown that the finiteness of $H$--orbits corresponds precisely to algebraicity over the fixed field $\mathbf{E}^H$, and that orbit length coincides with the degree of the minimal polynomial. This correspondence allows classical notions such as separability and primitivity to be expressed in purely group--theoretic terms.

At the level of extensions, we proved that the extension $\mathbf{E}/\mathbf{E}^H$ is Galois if and only if all $H$--orbits are finite, and that it is finite Galois precisely when the set of orbit lengths is bounded above. In this setting, Artin’s Lemma appears as a special case of the orbit–finiteness principle, and primitive elements are characterized by maximal orbit length.

Finally, we provided a constructive application: when $\mathbf{E} = \mathbf{F}(\alpha)$ and the $H$--orbit of $\alpha$ is finite, the fixed field $\mathbf{E}^H$ is explicitly generated by evaluating elementary symmetric polynomials on the orbit. This yields an effective algorithm for computing fixed fields directly from group action data, independent of the full Galois correspondence.

Together, these results recast key aspects of Galois theory through the lens of group actions and orbit finiteness, offering both conceptual clarity and computational access to classical invariants.

  

\end{document}